\newtheorem{thm}{Theorem}[section]
\newtheorem{prop}[thm]{Proposition}
\theoremstyle{remark}
\newtheorem{rem}[thm]{Remark}
\theoremstyle{definition}
\newtheoremstyle{Claim}{}{}{\itshape}{}{\itshape\bfseries}{:}{ }{#1}
\theoremstyle{Claim}
\newcommand{\R}{\mathbb{R}}
\newcommand{\eps}{\varepsilon}
\theoremstyle{plain}
\def\sideremark#1{\ifvmode\leavevmode\fi\vadjust{
\vbox to0pt{\hbox to 0pt{\hskip\hsize\hskip1em
\vbox{\hsize3cm\tiny\raggedright\pretolerance10000
\noindent #1\hfill}\hss}\vbox to8pt{\vfil}\vss}}}
\begin{document}

\title[]{Interpolated time-H\"older regularity of solutions of fully nonlinear parabolic equations}

\author{Alessandro Goffi}
\address{Dipartimento di Matematica e Informatica ``Ulisse Dini'', Universit\`a degli Studi di Firenze, 
viale G. Morgagni 67/A, 50134 Firenze (Italy)}
\curraddr{}
\email{alessandro.goffi@unifi.it}

\subjclass[2020]{35B65}

\keywords{Evans-Krylov theorem, Fully nonlinear parabolic equations, Isaacs equations}
 \thanks{
The author is member of the Gruppo Nazionale per l'Analisi Matematica, la Probabilit\`a e le loro Applicazioni (GNAMPA) of the Istituto Nazionale di Alta Matematica (INdAM). He was partially supported by the INdAM-GNAMPA projects 2024 and 2025, the King Abdullah University of Science and Technology (KAUST) project CRG2021-4674 ``Mean-Field Games: models, theory and computational aspects" and by the project funded by the EuropeanUnion – NextGenerationEU under the National Recovery and Resilience Plan (NRRP), Mission 4 Component 2 Investment 1.1 - Call PRIN 2022 No. 104 of February 2, 2022 of Italian Ministry of University and Research; Project 2022W58BJ5 (subject area: PE - Physical Sciences and Engineering) ``PDEs and optimal control methods in mean field games, population dynamics and multi-agent models" 
 }

\date{\today}

\begin{abstract}
We show interior Schauder estimates for a special class of fully nonlinear parabolic Isaacs equations by the maximum principle, providing an Evans-Krylov result for the model equation $\min\{\inf_{\beta}L_\beta u,\sup_\gamma L_\gamma u\}-\partial_t u=0$, where $L_\beta,L_\gamma$ are linear operators with possibly variable H\"older coefficients. We also give a proof of the Evans-Krylov theorem for fully nonlinear uniformly parabolic equations for which a regularity theory of the stationary non-homogeneous equation is available.
\end{abstract}

\maketitle

\section{Introduction}

In this note we provide a maximum principle approach to study interior maximal $C^\alpha$ smoothness (i.e. local $C^{2,\alpha}$ regularity with respect to the parabolic distance) of solutions to the following fully nonlinear parabolic model
\begin{equation}\label{parintro}
F(D^2u)-\partial_t u=0\text{ in }Q_1:=B_1\times(-1,0]\subseteq\R^{n+1},
\end{equation}
where we assume
\begin{equation}\label{Fccvarintro}
\begin{cases}
F(M)=\min\{F^\cap (M),F^\cup(M)\},\ \forall M\in\mathcal{S}_n,\\
F(0)=0,\ F^\cap,F^\cup\text{ are uniformly elliptic},\\
F^\cap\text{ is concave in $M$},\ F^\cup\text{ is convex in $M$}.
\end{cases}
\end{equation}
The uniform ellipticity for an operator $G=G(M):\mathrm{Sym}_n\to\R$, $\mathrm{Sym}_n$ being the space of $n\times n$ symmetric matrices, reads as 
\[
\lambda\|N\|\leq G(M+N)-G(M)\leq \Lambda\|N\|,\ \forall M\in\mathrm{Sym}_n,\ N\geq0,\ \lambda\leq\Lambda,\ \|N\|=\sup_{|x|=1}|Nx|.
\]
Note that the uniform ellipticity of $F^\cap,F^\cup$ implies that $F$ itself is uniformly elliptic. In this case we will call \eqref{parintro} uniformly parabolic. For instance, the result applies to the evolution PDE
\begin{equation}\label{IsaacsIntro}
\min\left\{\inf_{a\in\mathcal{A}}L_au,\sup_{b\in\mathcal{B}}L_bu\right\}-\partial_tu=0,
\end{equation}
where $\mathcal{A},\mathcal{B}$ are arbitrary sets and $L_a,L_b$ are linear operators of the form $L_ku=a_{ij}^k\partial_{ij}u+c_k$, $c_k=L_k0\in\R$, $a_{ij}^k$ with constant coefficients. This class can be written in the Isaacs form $\inf_{\sigma\in\Sigma}\sup_{\theta\in\Theta}L_{\sigma\theta}u-\partial_t u=0$, where the convex operators $\left(\sup_{\theta\in\Theta}L_{\sigma\theta}u\right)_{\sigma\in\Sigma}$ are all linear except for at most one $\sigma$. A prototype equation is the flow driven by the 3-operator
\[
F_3(D^2u)-\partial_tu=\min\{L_1u,\max\{L_2u,L_3u\}\}-\partial_tu,
\]
where $L_1,L_2,L_3$ are linear uniformly elliptic operators with constant coefficients. Our result provides a time-dependent counterpart of the maximal $C^\alpha$ regularity found by X. Cabr\'e and L. Caffarelli \cite{CCjmpa} and it can be regarded as an Evans-Krylov type result for a (time-dependent) nonconvex/nonconcave equation. We recall that the Evans-Krylov property means the boost of regularity
\[
C^{2,1}\rightarrow C^{2+\alpha,1+\alpha/2},
\]
see Theorem 6.1 of \cite{CC} for the elliptic result.\\
The outline of the proof of the maximal parabolic $C^\alpha$ regularity under the assumption \eqref{Fccvarintro} is the following: one first proves that for the homogeneous equation $F(D^2u)-\partial_t u=0$ we have
\[
\partial_t u,Du\in C^{\alpha,\alpha/2}
\]
for some small universal $\alpha\in(0,1)$, by the Krylov-Safonov theory \cite{KrylovSafonov} (here $C_t^{\alpha/2}$ denotes the H\"older continuity in time, uniformly in space). Indeed, the first derivative (or its incremental quotient), both in time and space, satisfies a linear nondivergence form equation, since the equation is invariant in space-time. This step does not require neither convexity nor concavity assumptions on $F$, and it holds for  continuous viscosity solutions of any uniformly parabolic equation, see Theorem \ref{firstorder}. Then, for each time slice, which we freeze and do not display, we write
\[
F(D^2u)=g
\]
where $g=\partial_t u\in C^{\alpha,\alpha/2}$. This implies by the stationary (space) regularity of \cite{CCjmpa}
\[
u\in C^{2,\alpha}_x
\]
at each time layer. To reach parabolic $C^{2+\alpha,1+\alpha/2}$ regularity of the solution to the parabolic equation  (and in particular $\alpha/2$ time-regularity of second derivatives) we exploit a maximum principle method. More precisely, exploiting the information $D^2u\in C^\alpha_x$, we improve the time-regularity of the gradient showing that
\[
Du\in C_t^{\frac{1+\alpha}{2}}
\]
via the comparison principle. We then exploit an interpolation argument between $Du\in C_t^{(1+\alpha)/2}$ and $D^2u\in C^{\alpha}_x$ to conclude that $D^2u\in C^{\alpha/2}_t$, proving thus the claimed $C^{2+\alpha,1+\alpha/2}$ regularity. Note that a direct interpolation with the sole information $\partial_t u,Du\in C^{\alpha,\alpha/2}$ available from the Krylov-Safonov theory fails to yield the expected $\alpha/2$-H\"older regularity in time of $D^2u$. However, it is  enough in general to reach classical smoothness for a smaller exponent $\bar \alpha=\frac{\alpha}{2+\alpha}<\frac{\alpha}{2}$.\\
 In the case of the heat equation, a maximum principle approach was used to prove the time-H\"older regularity of the Hessian in \cite{Knerr} via an argument of A. Brandt \cite{Brandt}. \\
Notably, this method provides a new proof of the parabolic $C^{2+\alpha,1+\alpha/2}$ regularity for fully nonlinear uniformly parabolic equations knowing the stationary $C^{2,\alpha}$ estimate for nonhomogeneous equations via the maximum principle; see the end of Section \ref{sec;Isaacs}. This is the case of fully nonlinear uniformly parabolic equations in two space dimensions \cite{Andrews,SilvestreAnnali}, operators with Cordes conditions \cite{Gjmpa,Vitor}, concave \cite{Evans82,KrylovBookNew} and quasiconcave equations \cite{Gjmpa}, operators of twisted type \cite{Collins16}. A comparison with other approaches to study the parabolic regularity of fully nonlinear equations is in Remark \ref{compare}. \\

The study of these estimates are motivated by the challenging open question of finding some special structures of $F$ (intermediate among the sole uniform ellipticity and weaker than convexity) guaranteeing classical smoothness of solutions when the parabolic dimension $n+1\geq 4$, cf. the introduction of \cite{Gjmpa} and Section \ref{sec;survey} for references on the subject. \\
Following \cite{CCjmpa}, we mention that these interior estimates could be used to prove an existence result for the Dirichlet problem by a refinement of the method of continuity based on weighted interior bounds. This approach was first introduced by J. H. Michael \cite{Michael} for linear stationary PDEs and extended to linear parabolic equations in \cite{Hoek}. We do not pursue this direction here, but just recall that the application of the classical method of continuity requires estimates up to the boundary along with a smooth operator, namely $F$ must be at least of class $C^1$. Moreover, we emphasize that the process of smoothing fully nonlinear operators is known only for concave/convex equations, and hence one needs a different approach for other classes of non-smooth operators.\\

\textit{Plan of the paper.} 
Section \ref{sec;survey} provides a brief survey of the known literature of the $C^{2,\alpha}$ solvability of fully nonlinear parabolic equations. Section \ref{sec;Isaacs} is devoted to the application of the maximum principle approach to obtain $C^{2+\alpha,1+\alpha/2}$ regularity for a time-dependent counterpart of an Isaacs model introduced by X. Cabr\'e and L. Caffarelli. We finally state an abstract result that provides $C^{2+\alpha,1+\alpha/2}$ regularity provided that $C^{2,\alpha}$ estimates for the elliptic problem are available.
\par\bigskip
\paragraph{\textit{Notation}} 
We denote by $B_r(x)$ the ball of center $x$ and radius $r$. When $x_0=0$ we simply write $B_r$.\\
$Q_r(x,t)$ is the parabolic cylinder $B_r(x)\times(t-r^2,t)$, and we write $Q_r$ when $(x,t)=(0,0)$.\\
We denote, given an open set $\Omega$, by $\partial_{\mathrm{par}}(\Omega\times(a,b))$ the parabolic boundary of a set.\\
We denote by $|u|_{0;\Omega}$ the sup-norm of $u$, i.e. $\|u\|_{C(\Omega)}$ (both in the elliptic and the parabolic case).\\
Let $Q\subset\Omega\times(0,T)$ and $\alpha\in(0,1)$. We denote by $d((x,t),(y,s))=|x-y|+|t-s|^\frac12$ the standard parabolic distance and
\begin{itemize}
\item $C^{\alpha,\alpha/2}(Q)$, $\alpha\in(0,1]$, the space of those $u:Q\to\R$ such that
\[
\|u\|_{C^{\alpha,\alpha/2}(Q)}:=|u|_{0;Q}+[u]_{C^{\alpha,\alpha/2}(Q)}=|u|_{0;Q}+\sup_{\substack{(x,t),(y,s)\in Q,\\ (x,t)\neq (y,s)}}\frac{|u(x,t)-u(y,s)|}{d^\alpha((x,t),(y,s))};
\]
\item $C^{1+\alpha,(1+\alpha)/2}(Q)$ the space of functions $u$ whose spatial gradient exists classically and equipped with the norm
\[
\|u\|_{C^{1+\alpha,(1+\alpha)/2}(Q)}:=|u|_{0;Q}+|Du|_{0;Q}+\sup_{\substack{(x,t),(y,s)\in Q,\\ (x,t)\neq (y,s)}}\frac{|u(x,t)-u(y,s)|}{d^{1+\alpha}((x,t),(y,s))}.
\]
In particular, any  $u\in C^{1+\alpha,(1+\alpha)/2}(Q)$ is such that each component of $Du$ belongs to $C^{\alpha,\alpha/2}(Q)$ and $u$ is H\"older continuous with exponent $(1+\alpha)/2$ in the time variable;
\item $C^{2+\alpha,1+\alpha/2}(Q)$ the space of functions $u$ such that
\[
\|u\|_{C^{2+\alpha,1+\alpha/2}(Q)}:=|u|_{0;Q}+\sum_{i=1}^n\|\partial_{x_i}u\|_{C^{1+\alpha,(1+\alpha)/2}(Q)}+\|\partial_t u\|_{C^{\alpha,\alpha/2}(Q)}
\]
This is equivalent to say that $D^2u$ belongs to $C^{\alpha,\alpha/2}(Q)$ and $\partial_t u$ belongs to $C^{\alpha,\alpha/2}(Q)$: it is a consequence of Remark 8.8.7 in \cite{KrylovBookHolder}. In this case we can consider the space $C^{2+\alpha,1+\alpha/2}(Q)$ equipped with the norm
\[
\|u\|_{C^{2+\alpha,1+\alpha/2}(Q)}:=|u|_{0;Q}+|Du|_{0;Q}+\|D^2u\|_{C^{\alpha,\alpha/2}(Q)}+\|\partial_t u\|_{C^{\alpha,\alpha/2}(Q)}.
\]
 For more properties on these spaces we refer to \cite{KrylovBookHolder}. Furthermore, we denote by
\[
\|u\|_{C^{2,1}(Q)}:=\sum_{2i+j\leq 2}|\partial_t^iD_x^j u|_{0;Q}.
\]
We will also use the equivalence (see e.g. p. 120 of \cite{KrylovBookHolder}) between the H\"older seminorm $[u]_{C^{\alpha,\alpha/2}(Q_1)}$ previously defined and the seminorm
\begin{align*}
[u]'_{C^{\alpha,\beta}(Q_1)}&=\sup_{t\in(-1,0)}\sup_{\substack{x,y\in B_1,\\ x\neq y}}\frac{|u(x,t)-u(y,t)|}{|x-y|^\alpha}+\sup_{x\in B_1}\sup_{\substack{t,s\in(-1,0),\\ t\neq s}}\frac{|u(x,t)-u(x,s)|}{|t-s|^\beta}\\
&=\sup_{t\in(-1,0)}[u(\cdot,t)]_{C^\alpha(B_1)}+\sup_{x\in B_1}[u(x,\cdot)]_{C^\beta((-1,0))}.
\end{align*}
\end{itemize}

\section{A survey on the (higher) regularity theory of fully nonlinear parabolic equations}\label{sec;survey}
In this section we survey on regularity results at the level of H\"older spaces for fully nonlinear parabolic equations, focusing on those of the form
\begin{equation}\label{parsurvey}
F(D^2u)-\partial_tu=0\text{ in }B_1\times(-1,0]\subset \R^{n+1}.
\end{equation}
We will in particular concentrate on structural conditions, possibly involving the dimension, guaranteeing classical solutions. The interested reader can find a complete updated account in the book \cite{FRRO} for the elliptic theory. We do not aim at discussing here $W^{2,1}_q$ regularity of solutions and we refer to \cite{Crandall,KrylovBookNew,Wang2} for more details and to the next section for the main results on the low-regularity theory. \\

The simplest result for \eqref{parsurvey} says that solutions to uniformly parabolic equations in $D=2+1$ (here we denote by $D=n+1$ the dimension in space-time) are always classical without any other assumption on $F$, see \cite{Andrews,Gjmpa,SilvestreAnnali} and the references therein. Its elliptic analogue was proved by L. Nirenberg (in the elliptic dimension $n=2$) \cite[Theorem 4.9]{FRRO}, and its proof, based on the De Giorgi-Nash-Moser theory applied to the equation solved by second derivatives, does not seem to generalize to parabolic equations. It is also important to mention a result of S. Kruzhkov showing that the same result holds in the $D=2$ case ($n=1$ variable in space), cf. \cite{K1} and Section XIV.7 in \cite{Lieberman}.
\begin{thm}\label{3D}
Assume that $F$ is uniformly elliptic and $n=2$. Then viscosity solutions to \eqref{parsurvey} (in dimension $n+1=3$) are always classical and belong to $C^{2+\alpha,1+\alpha/2}_{\mathrm{loc}}$ for some small universal $\alpha\in(0,1)$. It holds
\[
\|u\|_{C^{2+\alpha,1+\alpha/2}(Q_\frac12)}\leq C\|u\|_{L^\infty(Q_1)}.
\]
\end{thm}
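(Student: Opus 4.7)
The plan is to execute the five-step scheme sketched by the author in the introduction, which becomes available without any convexity/concavity structure on $F$ precisely when $n=2$, thanks to Nirenberg's two-dimensional elliptic regularity theorem.

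First, by Theorem \ref{firstorder} (Krylov--Safonov applied to the linear non-divergence parabolic equations satisfied by incremental quotients of $u$ in space and time, using the translation invariance of $F$), one obtains $\partial_t u, Du \in C^{\alpha_0, \alpha_0/2}_{\mathrm{loc}}(Q_1)$ for some universal $\alpha_0\in(0,1)$. Setting $g := \partial_t u$ and freezing the time variable, the spatial slice $v_t(\cdot):=u(\cdot,t)$ is a viscosity solution of the stationary fully nonlinear equation $F(D^2 v_t) = g(\cdot,t)$ in $B_1$ with a H\"older right-hand side. Since $n=2$, Nirenberg's theorem for uniformly elliptic fully nonlinear equations in two spatial dimensions applies and yields
\[
\|v_t\|_{C^{2,\alpha}(B_{1/2})}\leq C\big(\|v_t\|_{L^\infty(B_1)}+\|g(\cdot,t)\|_{C^\alpha(B_1)}\big),
\]
so that $D^2 u(\cdot,t)\in C^\alpha_x$ uniformly in $t$.

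Next, one boosts the time regularity of the gradient by a comparison-principle argument: exploiting the now-available quadratic Taylor expansion in space at each fixed time, one builds upper/lower barriers over short time intervals which, compared with $u$ via the parabolic maximum principle, yield $Du\in C^{(1+\alpha)/2}_t$. A standard interpolation inequality between $Du\in C^{(1+\alpha)/2}_t$ and $D^2 u\in C^\alpha_x$ then forces $D^2 u \in C^{\alpha/2}_t$. Combining with the spatial regularity already obtained, and using the identity $\partial_t u = F(D^2 u)$ together with the Lipschitzianity of $F$ to conclude $\partial_t u \in C^{\alpha, \alpha/2}$, we get $u \in C^{2+\alpha, 1+\alpha/2}_{\mathrm{loc}}(Q_1)$ with the claimed interior bound $\|u\|_{C^{2+\alpha,1+\alpha/2}(Q_{1/2})} \leq C\|u\|_{L^\infty(Q_1)}$.

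The main technical obstacle is the maximum principle step promoting $Du$ from the weak time regularity $C^{\alpha_0/2}_t$ coming from Krylov--Safonov to the sharp exponent $(1+\alpha)/2$. A direct interpolation using only the a priori available $Du\in C^{\alpha_0/2}_t$ would produce a suboptimal time exponent for $D^2 u$, enough to deduce classical smoothness but not the sharp $C^{2+\alpha,1+\alpha/2}$ Schauder scaling. The construction of the barriers crucially relies on the two-dimensional elliptic estimate to control $D^2 u$ spatially at each time slice, and then on the parabolic comparison principle to transfer this spatial $C^{2,\alpha}$ information into the desired time H\"older regularity of $Du$.
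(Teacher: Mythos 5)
Your proposal is correct, but it is worth flagging that the paper does not actually give its own proof of Theorem \ref{3D}: this statement appears in the survey section and is simply attributed to the literature (\cite{Andrews,Gjmpa,SilvestreAnnali}). What you have written out is precisely the paper's new maximum-principle scheme from Section \ref{sec;Isaacs}, in the form of the abstract theorem stated at the end of that section under hypothesis (H), specialized to $n=2$ by supplying (H) via Nirenberg's two-dimensional $C^{2,\alpha}$ elliptic estimate for $F(D^2v)=f$, $f\in C^\alpha$. In other words, your proof is a legitimate alternative route, distinct from the one in the cited references: those works (see Remark \ref{compare}) rely on the freezing of time combined with an interpolation in time between $D^2u(\cdot,t)\in C^\alpha_x$ and $\partial_t u(\cdot,t)\in C^\alpha_x$ (cf.\ Lemma 4, p.\ 259 of \cite{KrylovBookNew}), whereas you follow the paper's barrier/comparison-principle argument (Proposition \ref{improve}) to push $Du$ to $C_t^{(1+\alpha)/2}$ and then the \cite{LSU} interpolation lemma to land $D^2u$ in $C_t^{\alpha/2}$. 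Both yield the same conclusion; your version buys a unified mechanism that works for any operator admitting interior elliptic $C^{2,\alpha}$ estimates and avoids the parabolic Evans--Krylov machinery, while the reference proofs avoid the barrier construction. Two small points worth making explicit in a final write-up: (i) the exponent $\alpha$ produced by the 2D elliptic step and the one from Krylov--Safonov may differ, so one should take the minimum and fix it throughout; (ii) the statement that the frozen slice $u(\cdot,t)$ is a viscosity solution of the elliptic equation $F(D^2v)=g(\cdot,t)$ deserves a word of justification, since it uses the $C^{\alpha,\alpha/2}$ regularity of $\partial_t u$ already established.
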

A second result holds under Cordes-type assumptions on the ellipticity in any space dimension, see Theorem  6.4 in \cite{Gjmpa} or \cite{Vitor}.
\begin{thm}\label{Cordes}
Assume that $F$ is uniformly elliptic and $\frac{\Lambda}{\lambda}\leq 1+\delta$ for a small (explicit) constant $\delta$ depending only on $n$. Then viscosity solutions to \eqref{parsurvey} are always classical (in any dimension) and belong to $C^{2+\alpha,1+\alpha/2}_{\mathrm{loc}}$ for some small universal $\alpha$. In addition, we have the regularity estimate
\[
\|u\|_{C^{2+\alpha,1+\alpha/2}(Q_\frac12)}\leq C\|u\|_{L^\infty(Q_1)}.
\]
\end{thm}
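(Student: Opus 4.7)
The plan is to apply the method outlined in the introduction for the Isaacs model, which, as emphasized at the end of Section \ref{sec;Isaacs} of the paper, applies to any parabolic equation admitting $C^{2,\alpha}$ interior elliptic estimates. Under the Cordes condition $\Lambda/\lambda \leq 1+\delta$, such interior estimates hold in any dimension without any convexity or concavity requirement on $F$, which is precisely the elliptic Cordes-type $C^{2,\alpha}$ theorem. So the task is to upgrade the elliptic Cordes result to its parabolic counterpart by interpolation in time.

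Concretely, I would proceed in four steps. First, I would apply Theorem \ref{firstorder} to the homogeneous equation: since $F$ has constant coefficients and does not depend on $(x,t)$, incremental quotients of $u$ in space and in time satisfy linear uniformly parabolic equations with measurable coefficients, and the Krylov-Safonov theory yields $\partial_t u, Du \in C^{\alpha,\alpha/2}_{\mathrm{loc}}$ for some small universal $\alpha\in(0,1)$. Second, at each fixed time $t$, the slice $v(x):=u(x,t)$ solves $F(D^2 v) = g(\cdot,t)$ with $g=\partial_t u\in C^\alpha_x$ uniformly in $t$; the elliptic Cordes $C^{2,\alpha}$ estimate then gives $v\in C^{2,\alpha}_{\mathrm{loc}}$ with estimates uniform in the time slice, i.e.\ $D^2 u\in C^{\alpha}_x$ uniformly in $t$.

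Third, I would invoke the maximum principle argument that will be developed in Section \ref{sec;Isaacs}: the information $D^2 u\in C^\alpha_x$ is used, via the comparison principle against suitable affine-in-space barriers, to improve the H\"older-in-time modulus of the gradient from the Krylov-Safonov exponent $\alpha/2$ up to $(1+\alpha)/2$, i.e.\ $Du\in C^{(1+\alpha)/2}_t$. Finally, interpolating between $Du\in C^{(1+\alpha)/2}_t$ and $D^2 u\in C^{\alpha}_x$ yields $D^2 u\in C^{\alpha/2}_t$, which together with the spatial estimate of Step~2 gives the full parabolic norm $\|u\|_{C^{2+\alpha,1+\alpha/2}(Q_{1/2})}\leq C\|u\|_{L^\infty(Q_1)}$ and hence classical solvability.

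The main obstacle is the justification of Step~2, i.e.\ the stationary Cordes-type $C^{2,\alpha}$ estimate in arbitrary dimension with right-hand side in $C^\alpha$. This rests on the Cordes integral identity, which under $\Lambda/\lambda\leq 1+\delta$ provides $W^{2,2}$ energy bounds, followed by a bootstrap into a subcritical H\"older class via Sobolev embedding and a perturbative comparison against constant-coefficient linear operators to close the argument at the $C^{2,\alpha}$ level; this is the content of the cited elliptic results in \cite{Gjmpa,Vitor} and is taken as a black box. Once that ingredient is available, Steps~1, 3 and 4 are precisely the scheme that the paper develops in detail for the Isaacs model and can be transported verbatim, since no convexity of $F$ is used in the maximum principle step.
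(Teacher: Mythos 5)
Your proposal is correct, and it is worth pointing out that the paper itself does not give a proof of Theorem~\ref{Cordes}: the result is stated as part of the survey in Section~\ref{sec;survey} and is simply cited from \cite{Gjmpa} (Theorem 6.4) and \cite{Vitor}, where the parabolic Cordes estimate is established directly by parabolic methods. What you propose instead is a re-derivation that fits Theorem~\ref{Cordes} into the abstract scheme the paper develops at the end of Section~\ref{sec;Isaacs}: supply hypothesis (H) --- here, the elliptic Cordes $C^{2,\alpha}$ estimate for $F(D^2v)=g$ with $g\in C^\alpha$ under $\Lambda/\lambda\leq 1+\delta$ --- then run Theorem~\ref{firstorder}, the time-slicing into stationary problems, Proposition~\ref{improve}, and the \cite{LSU} interpolation. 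Since Proposition~\ref{improve} requires no structural hypothesis on $F$ beyond uniform ellipticity, this transplants cleanly, and the exponent one obtains is the minimum of the Krylov--Safonov exponent and the elliptic Cordes exponent. This is a genuinely different route from what the paper cites, and it buys exactly what the paper advertises: a uniform, maximum-principle path from elliptic $C^{2,\alpha}$ theory to its parabolic counterpart, with the Cordes case as one more instance alongside the Isaacs and concave cases.

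Two small inaccuracies are worth flagging. First, your description of the barrier in Step~3 as ``affine-in-space'' is off: the comparison functions $\Phi^\pm$ in Proposition~\ref{improve} are affine plus a quadratic $|z'-z|^2$ term and a linear-in-time term, and the quadratic piece is essential for making $\Phi^\pm$ super/subsolutions of the Pucci extremal equations. Second, your black-box sketch of the elliptic Cordes $C^{2,\alpha}$ estimate (Cordes integral identity, $W^{2,2}$, Sobolev bootstrap) is not the usual route in the fully nonlinear setting: the standard argument is perturbative, treating $F$ as a small deformation of a multiple of the Laplacian and running a Caffarelli-type approximation by harmonic polynomials. Neither inaccuracy affects the logic, since you invoke the stationary Cordes estimate purely as a black box, but the sketch should be corrected so as not to mislead the reader about what the cited references actually prove.
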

We also mention that $C^{2+\alpha,1+\alpha/2}$ estimates can be obtained without any concavity assumption for short time horizons, see e.g. Chapter 8 of \cite{LunardiBook}, \cite{LionsSouganidis} and the references therein. More recently, maximal $C^{\alpha}$ estimates were also achieved for flat solutions (i.e. with small $L^\infty$ norm) in \cite{WangIUMJ} without concavity assumptions on $F$, extending a result due to O. Savin for stationary equations. These latter and the previous results are true for any uniformly parabolic equation, and hence for uniformly parabolic Isaacs equations given by
\[
\inf_{\eta\in A}\sup_{\gamma \in B} \{ L_{\eta\gamma} u - f_{\eta\gamma} \}=0,
\]
under appropriate restrictions (e.g. on the dimension, the coefficients, the solution or the time horizon).
We also emphasize that some counterexamples to the smoothness of solutions when $F$ is only uniformly parabolic can be found in the recent paper \cite{SilvestreAnnali}, which provides a time-dependent counterpart of the analysis in \cite{NTV}.\\

A counterpart of the result by Evans-Krylov \cite{Evans82,KrylovBookNew} provides parabolic $C^{2,\alpha}$ estimates under concavity assumptions on $F$, cf. \cite{Wang2}, see also \cite{Lieberman,KrylovBookOld,KrylovBookNew} for smooth operators.
\begin{thm}\label{EKpar}
Assume that $F$ is uniformly elliptic and concave. Then viscosity solutions to \eqref{parsurvey} are always classical and belong to $C^{2+\alpha,1+\alpha/2}_{\mathrm{loc}}$ for some small universal $\alpha$. It holds
\[
\|u\|_{C^{2+\alpha,1+\alpha/2}(Q_\frac12)}\leq C\|u\|_{L^\infty(Q_1)}.
\]
\end{thm}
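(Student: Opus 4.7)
The plan is to follow the maximum principle strategy sketched in the introduction, which for concave uniformly elliptic $F$ reduces the parabolic Evans-Krylov theorem to its stationary counterpart (Theorem 6.1 of \cite{CC}) together with a comparison argument that promotes the time regularity of $Du$. The concavity assumption enters only in the application of the stationary result; the remaining two stages are purely parabolic.

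I would first apply the Krylov-Safonov type result Theorem \ref{firstorder} to the incremental quotients in both space and time, which is legitimate because \eqref{parsurvey} is invariant under translations in $(x,t)$; this yields
\[
Du,\ \partial_t u \in C^{\alpha,\alpha/2}_{\mathrm{loc}}(Q_1)
\]
for some universal $\alpha \in (0,1)$, with norms controlled by $\|u\|_{L^\infty(Q_1)}$. This step is independent of the concavity of $F$. Freezing an arbitrary $t$, I would then read
\[
F(D^2 u(\cdot, t)) = \partial_t u(\cdot, t)
\]
as a stationary concave equation with right-hand side in $C^\alpha_x$ uniformly in $t$. The elliptic Evans-Krylov theorem with H\"older source term then delivers
\[
\sup_{t \in (-1/2, 0]} \|u(\cdot, t)\|_{C^{2,\alpha}(B_{1/2})} \leq C\|u\|_{L^\infty(Q_1)}.
\]

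The central step, and the one I expect to be the main obstacle, is to upgrade the time regularity of $Du$ from $C^{\alpha/2}_t$ to $C^{(1+\alpha)/2}_t$ via the comparison principle. Fix $(x_0, t_0) \in Q_{1/2}$ and let $P$ be the second-order Taylor polynomial of $u(\cdot, t_0)$ at $x_0$; by the spatial $C^{2,\alpha}$ estimate just obtained, $|u(x, t_0) - P(x)| \leq C|x - x_0|^{2+\alpha}$. The function
\[
v(x,t) := u(x,t) - P(x) - F(D^2 u(x_0, t_0))(t - t_0)
\]
solves a uniformly parabolic equation of the form $\tilde F(D^2 v) - \partial_t v = 0$ with $\tilde F(M) := F(M + D^2 u(x_0,t_0)) - F(D^2 u(x_0,t_0))$, vanishes at $(x_0, t_0)$, and is of size $|x - x_0|^{2+\alpha}$ on the initial slice $\{t = t_0\}$. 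I would then compare $v$ from above and below with parabolic barriers of the form $A|x - x_0|^{2+\alpha} + B(t - t_0)^{(1+\alpha)/2}$ on cylinders of suitably small radius, controlling $\tilde F$ by the Pucci extremal operators; calibrating $A,B$ so that the barriers dominate $v$ on the parabolic boundary produces
\[
|Du(x_0, t) - Du(x_0, t_0)| \leq C|t - t_0|^{(1+\alpha)/2}.
\]

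Finally, interpolating between $Du \in C_t^{(1+\alpha)/2}$ and $D^2 u \in C^\alpha_x$ yields the missing time H\"older regularity of the Hessian. For any $\eta > 0$,
\[
\left|D^2 u(x, t) - \frac{Du(x + \eta e_i, t) - Du(x, t)}{\eta}\right| \leq C\eta^\alpha,
\]
and analogously at time $s$; the difference of the two quotients is bounded by $2\eta^{-1}|t - s|^{(1+\alpha)/2}$. Choosing $\eta = |t - s|^{1/2}$ balances both contributions at order $|t - s|^{\alpha/2}$, so $D^2 u \in C^{\alpha/2}_t$. Combined with $\partial_t u \in C^{\alpha, \alpha/2}$ and $D^2 u \in C^\alpha_x$ from the earlier steps, this delivers $u \in C^{2+\alpha, 1+\alpha/2}_{\mathrm{loc}}$ with the stated estimate.
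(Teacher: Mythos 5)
Your overall plan is the paper's strategy for Theorem \ref{mainpar1}, which the paper explicitly points out also gives a new proof of Theorem \ref{EKpar}: Krylov--Safonov to get $\partial_t u,Du\in C^{\alpha,\alpha/2}$, freeze time and apply the stationary Evans--Krylov theorem with H\"older right-hand side to get $D^2u\in C^\alpha_x$, then a comparison argument to upgrade $Du$ to $C_t^{(1+\alpha)/2}$, and finally interpolate to get $D^2u\in C_t^{\alpha/2}$. The first, second, and fourth steps are sound (the difference-quotient interpolation you use in place of the paper's citation to Lemma 3.1, p.~80 of \cite{LSU} works fine).

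The genuine gap is in the third step, which is precisely the content of the paper's Proposition \ref{improve}. You apply the barrier comparison to $v=u-P-F(D^2u(x_0,t_0))(t-t_0)$, which lives at the level of $u$, and then simply assert that dominating $v$ by $A|x-x_0|^{2+\alpha}+B(t-t_0)^{(1+\alpha)/2}$ on a cylinder ``produces'' $|Du(x_0,t)-Du(x_0,t_0)|\leq C|t-t_0|^{(1+\alpha)/2}$. This passage is unjustified: the comparison principle gives a sup-norm bound on $v$, not on $D_xv=Du-DP$, and converting a bound $|v(x,t)|\leq A|x-x_0|^{2+\alpha}+B(t-t_0)^{(1+\alpha)/2}$ into a bound on $|D_xv(x_0,t)|$ via the even/odd symmetrization trick (or interior interpolation) optimizing over the radius $\rho$ yields $|D_xv(x_0,t)|\lesssim (t-t_0)^{(1+\alpha)^2/(2(2+\alpha))}$, strictly worse than $(t-t_0)^{(1+\alpha)/2}$ when $\alpha<1$; if you are instead forced to take $\rho\sim(t-t_0)^{1/2}$ you merely recover the Krylov--Safonov exponent $\alpha/2$. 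Moreover, $A|x-x_0|^{2+\alpha}$ is convex, hence a Pucci-max \emph{sub}solution, and $\partial_t\big(B(t-t_0)^{(1+\alpha)/2}\big)$ decays in $t$, so your barrier is a Pucci supersolution only in a restricted time range, which needs to be addressed. The paper's Proposition \ref{improve} closes the argument differently, at the level of the gradient itself: it considers the spatial incremental quotient $u_h$ (which is $\approx D_eu$ and belongs to the class $\mathcal{S}(\lambda/n,\Lambda)$, hence solves a uniformly parabolic equation), uses that $u_h\in C^{1,\beta}_x$ uniformly to write the first-order Taylor bound $|u_h(z',t)-u_h(z,t)-Du_h(z,t)\cdot(z'-z)|\leq C|z'-z|^{1+\beta}$, converts it via the weighted Young inequality $C|z'-z|^{1+\beta}\leq\eps+\widetilde C\eps^{-(1-\beta)/(1+\beta)}|z'-z|^2$ into a \emph{quadratic} paraboloid plus a linear-in-time term (for which supersolution/subsolution checking is trivial), applies the comparison principle, evaluates at $z'=z$, and optimizes over $\eps$ to get $|u_h(z,t')-u_h(z,t)|\lesssim(t'-t)^{(1+\beta)/2}$ directly. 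You should replace your barrier comparison on $v$ with this incremental-quotient argument to obtain the claimed gradient bound.
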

Concavity on the second derivatives was recently weakened to the requirement of convexity of the superlevel sets, cf. \cite{Gjmpa}. The latter also contains some $C^{1,1}$ estimates under concavity-type conditions at infinity implying, among others, $W^{2,1}_q$ estimates for non-homogeneous equations. A recent account on the treatment of fully nonlinear parabolic equations with relaxed concavity conditions can be found in \cite{KrylovBookNew}.\\
We also mention that slightly before the Evans-Krylov theory, the paper \cite{Schoenauer} provided an obstacle problem approach to reach parabolic $C^{2,\beta}$, $\beta\in(0,1)$ small and universal, regularity for the model two-operator equation
\[
\max\{\partial_t u-L_1u,\partial_tu-L_2u\}=0
\]
(here $L_1,L_2$ are uniformly elliptic linear operators) by reducing the problem to a variational inequality, an idea introduced by Br\'ezis-Evans \cite{BrezisEvans} for stationary equations. More references on the early regularity theory of fully nonlinear elliptic and parabolic equations can be found in \cite[p. 382-384]{Lieberman}. We are not aware of any nonconcave/nonconvex $F$ guaranteeing $C^{2+\alpha,1+\alpha/2}$ estimates in the time-dependent framework, except for \cite{StreetsWarren} and the aforementioned papers dealing with certain restrictions on the data of the problem.

\section{$C^{2+\alpha,1+\alpha/2}$ regularity for fully nonlinear parabolic Isaacs equations}\label{sec;Isaacs}
Our main result is an Evans-Krylov theorem for
\begin{equation}\label{parIsaacs}
F(D^2u)-\partial_t u=0\text{ in }Q_1=B_1\times(-1,0].
\end{equation}
with
\begin{equation}\label{Fcc}
\begin{cases}
F(M)=\min\{F^\cap (M),F^\cup(M)\},\ \forall M\in\mathcal{S}_n\\
F(0)=0,\ F^\cap,F^\cup\text{ are uniformly elliptic}\\
F^\cap\text{ is concave in $M$},\ F^\cup\text{ is convex in $M$}.
\end{cases}
\end{equation}
We recall that the Evans-Krylov theorem allows us to pass from $C^{1,1}$ to $C^{2,\alpha}$ regularity in the stationary case, see e.g. \cite[Chapter 6]{CC}.
\begin{thm}\label{mainpar1}
Let $u\in C(Q_1)$ be a viscosity solution to \eqref{parIsaacs} with $F$ satisfying \eqref{Fcc}. Then for some universal $\tilde{\alpha}\in(0,1)$ depending on $n,\lambda,\Lambda$ we have
\[
\|u\|_{C^{2+\tilde\alpha,1+\tilde\alpha/2}(\overline{Q}_\frac12)}\leq C\|u\|_{L^\infty(Q_1)}.
\]
where $C$ is a constant depending on $\lambda,\Lambda,n,\tilde\alpha$.
\end{thm}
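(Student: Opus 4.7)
The plan follows the four-step strategy outlined in the introduction: linear derivative regularity via Krylov--Safonov, spatial $C^{2,\alpha}$ at each time slice via the stationary Cabr\'e--Caffarelli theorem, then an improvement of the time-modulus of $Du$ by a Pucci comparison, and finally a one-variable interpolation.

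Because the equation is autonomous, any spatial or temporal difference quotient of $u$ solves a linear uniformly parabolic equation in nondivergence form with coefficients obtained from the mean-value formula $F(M_1)-F(M_2)=\int_0^1\partial_MF(sM_1+(1-s)M_2)\,ds:(M_1-M_2)$, which are uniformly elliptic by \eqref{Fcc}. Krylov--Safonov applied to these quotients (Theorem~\ref{firstorder}) gives $Du,\partial_tu\in C^{\alpha,\alpha/2}_{\mathrm{loc}}(Q_{3/4})$ for some universal $\alpha\in(0,1)$. Freezing the time variable, at each $t$ the function $u(\cdot,t)$ solves $F(D^2u(\cdot,t))=g_t$ with $g_t:=\partial_tu(\cdot,t)\in C^\alpha_x$ uniformly in $t$, which is exactly the operator class treated in \cite{CCjmpa}; so $u(\cdot,t)\in C^{2,\alpha}_x(B_{2/3})$ with a bound independent of $t$.

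For the key third step, fix $(x_0,t_0)\in\overline{Q}_{1/2}$ and consider the parabolic Taylor polynomial
\[
P(x,t):=u(x_0,t_0)+(t-t_0)\partial_tu(x_0,t_0)+Du(x_0,t_0)\cdot(x-x_0)+\tfrac12(x-x_0)^TD^2u(x_0,t_0)(x-x_0),
\]
which is itself a classical solution of \eqref{parIsaacs} thanks to the equation at $(x_0,t_0)$. The difference $w:=u-P$ satisfies in the viscosity sense the Pucci inequalities $\mathcal{M}^-_{\lambda,\Lambda}(D^2w)\le\partial_tw\le\mathcal{M}^+_{\lambda,\Lambda}(D^2w)$, with $|w(x,t_0)|\le C|x-x_0|^{2+\alpha}$ coming from Step~2. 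I would compare $w$ with the explicit Pucci supersolution
\[
\Psi(x,t):=A\bigl(|x-x_0|^2+M(t-t_0)\bigr)^{(2+\alpha)/2},
\]
which by direct computation satisfies $\mathcal{M}^+_{\lambda,\Lambda}(D^2\Psi)-\partial_t\Psi\le0$ provided $M\ge2\Lambda(n+\alpha)$, and dominates $|w|$ on the parabolic bottom $\{t=t_0\}\cap B_{r_0}(x_0)$ once $A$ is taken large. To conclude the comparison on the forward cylinder $Q_{r_0}^{+}(x_0,t_0):=B_{r_0}(x_0)\times[t_0,t_0+r_0^2]$ one also needs $|w|\le\Psi$ on the lateral boundary, which I would deduce from the spatial $C^{2,\alpha}$ estimate of Step~2 combined with the weaker temporal moduli of $Du$ and $\partial_tu$ from Step~1, at the price of choosing $r_0>0$ small but universal. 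This yields
\[
|w(x,t)|\le C\bigl(|x-x_0|^{2+\alpha}+(t-t_0)^{1+\alpha/2}\bigr)\quad\text{on }Q_{r_0}^{+}(x_0,t_0),
\]
and applying the standard interior gradient estimate for Pucci-linearized equations on the cylinder $B_{\sqrt{h}/2}(x_0)\times(t_0+3h/4,t_0+h]$ (where $\sup|w|\le Ch^{1+\alpha/2}$), together with the identity $D_xP(x_0,\cdot)\equiv Du(x_0,t_0)$, produces $|Du(x_0,t_0+h)-Du(x_0,t_0)|\le Ch^{(1+\alpha)/2}$, that is $Du\in C^{(1+\alpha)/2}_t$.

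The interpolation step is a one-variable computation: for $h,k>0$, writing the spatial first-difference approximation of $D^2_{ij}u$ at the two times $t+h$ and $t$ and using $D^2u\in C^\alpha_x$ to control the remainder, one gets
\[
|D^2_{ij}u(x,t+h)-D^2_{ij}u(x,t)|\le\frac{2\,[D_iu]_{C^{(1+\alpha)/2}_t}\,h^{(1+\alpha)/2}}{k}+Ck^\alpha,
\]
and optimizing at $k=h^{1/2}$ gives $D^2u\in C^{\alpha/2}_t$. Combined with $D^2u\in C^\alpha_x$ and $\partial_tu\in C^{\alpha,\alpha/2}$ this closes the estimate with $\tilde\alpha=\alpha$. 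The main obstacle is the lateral-boundary control in the Pucci comparison: since $P(x,t)$ coincides with the second-order spatial Taylor of $u$ only at $t=t_0$, bounding $|w|$ on $\{|x-x_0|=r_0\}\times[t_0,t_0+r_0^2]$ by $\Psi$ requires a careful interplay between the spatial $C^{2,\alpha}$ bound and the (weaker) temporal moduli already at hand, absorbed by a judicious choice of $r_0$. Once this is handled, the remaining ingredients -- comparison principle, interior parabolic gradient bounds, and single-variable interpolation -- are essentially standard.
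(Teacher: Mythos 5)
Your overall strategy matches the paper's: Krylov--Safonov for first-order regularity, Cabr\'e--Caffarelli per time slice for $D^2u\in C^\alpha_x$, a maximum-principle improvement of the temporal modulus of $Du$ to $C^{(1+\alpha)/2}_t$, and a one-variable interpolation to close $D^2u\in C^{\alpha/2}_t$. Steps 1, 2 and 4 coincide with the paper. The genuine divergence is in Step 3, where the paper (Proposition~\ref{improve}) works with the spatial difference quotient $u_h$: its \emph{spatial} Taylor expansion $|u_h(z',t)-u_h(z,t)-Du_h(z,t)\cdot(z'-z)|\le C|z'-z|^{1+\beta}$ is converted via Young's inequality into a paraboloid barrier $\Phi^\pm$, the comparison principle is applied, and then $z'=z$ and optimization in $\eps$ give the time modulus of $u_h$ directly -- no further interior estimate is needed. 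You instead take $w=u-P$ with $P$ the full parabolic second-order Taylor polynomial (using that $P$ is itself a solution), compare against the power barrier $\Psi=A(|x-x_0|^2+M(t-t_0))^{(2+\alpha)/2}$, and then extract the gradient modulus from the resulting sup bound. Your barrier computation is correct ($M\ge 2\Lambda(n+\alpha)$ suffices), and the lateral-boundary control you flag as the ``main obstacle'' in fact closes cleanly: on $\{|x-x_0|=r_0\}\times[t_0,t_0+r_0^2]$ one has $|w|\le Cr_0^{2+\alpha}$ from the spatial $C^{2,\alpha}$ remainder plus $\int_{t_0}^t(\partial_tu(x,s)-\partial_tu(x_0,t_0))\,ds$ with $\partial_tu\in C^{\alpha,\alpha/2}$, and $\Psi\ge Ar_0^{2+\alpha}$ there, so taking $A$ large is enough (no smallness of $r_0$ required).

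Two points deserve a caveat. First, the mean-value formula $F(M_1)-F(M_2)=\int_0^1\partial_MF\,ds:(M_1-M_2)$ that you write in Step~1 is not literally available, since under \eqref{Fcc} the operator $F=\min\{F^\cap,F^\cup\}$ is only Lipschitz, not $C^1$; the correct statement (used by both Theorem~\ref{firstorder} and Proposition~\ref{improve}) is that difference quotients lie in the Pucci class $\mathcal{S}(\lambda/n,\Lambda)$, which is what the uniform-ellipticity bounds give without differentiating $F$. Second, and more substantively for your Step~3: there is no ``standard interior gradient estimate for Pucci-linearized equations.'' The function $w=u-P$ merely satisfies $\mathcal{M}^-_{\lambda,\Lambda}(D^2w)\le\partial_tw\le\mathcal{M}^+_{\lambda,\Lambda}(D^2w)$, and for that class the only interior estimate available is Krylov--Safonov $C^{\alpha',\alpha'/2}$ with small $\alpha'$; Lipschitz or $C^1$ regularity can fail (this is exactly the content of the singular-solution constructions). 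However, the step can be repaired without appealing to any parabolic regularity for $w$: since $u(\cdot,t_0+h)\in C^{2,\alpha}_x$ with a uniform bound, and $D_xP(x_0,\cdot)\equiv Du(x_0,t_0)$, one can use the elementary one-dimensional interpolation $|f'(0)|\le C(\rho^{-1}\sup_{[-\rho,\rho]}|f|+\rho^{1+\alpha}[f'']_{C^\alpha})$ applied to $f(s):=w(x_0+se,t_0+h)$ with $\rho=h^{1/2}$; combined with the comparison bound $\sup|w|\le Ch^{1+\alpha/2}$ on $B_{h^{1/2}}(x_0)\times\{t_0+h\}$ this gives $|Du(x_0,t_0+h)-Du(x_0,t_0)|\le Ch^{(1+\alpha)/2}$, exactly what you need. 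With that replacement, your route is correct: it is closer in flavor to the classical polynomial-approximation Schauder argument, while the paper's version avoids the auxiliary interpolation by running the comparison directly at the level of the first difference quotient.
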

\begin{rem}
Differently from the elliptic case, the condition $F(0)=0$ cannot be in general dropped, as outlined in \cite[Remark p. 257]{KrylovBookNew}.
\end{rem}
As a corollary, an argument by L. Caffarelli \cite[Section 8.1]{CC}, see also \cite[Theorem 1.1]{Wang2} for the time-dependent case, provides the maximal $C^\alpha$ regularity, $\alpha<\tilde\alpha$, for the non-homogeneous equation
\[
F(x,t,D^2u)-\partial_t u=f(x,t)
\]
under the following assumptions on $F$:
\begin{itemize}
\item[(i)] For every $(x_0,t_0)\in Q_1$, the operator $F(x_0,t_0,\cdot)$ is the minimum of a concave and a convex operator (possibly depending on $(x_0,t_0)$);
\item[(ii)] $F(\cdot,M)$ and $f(\cdot)$ are H\"older continuous functions with exponent $\alpha$ with respect to the parabolic distance.
\end{itemize}
This implies that $F(x,t,D^2u)$ can be an Isaacs operator of the form \eqref{IsaacsIntro} with variable H\"older continuous coefficients with respect to the parabolic distance.\\

We denote by $\mathcal{M}^\pm_{\lambda,\Lambda}$ the Pucci's extremal operators with constants $0<\lambda\leq \Lambda$ defined by
\[
\mathcal{M}^+_{\lambda,\Lambda}(M)=\sup_{\lambda I_n\leq A\leq \Lambda I_n}\mathrm{Tr}(AM)=\Lambda\sum_{e_k>0}e_k+\lambda\sum_{e_k<0}e_k,
\]
\[
\mathcal{M}^-_{\lambda,\Lambda}(M)=\sup_{\lambda I_n\leq A\leq \Lambda I_n}\mathrm{Tr}(AM)=\lambda\sum_{e_k>0}e_k+\Lambda\sum_{e_k<0}e_k,
\]
 where $e_k=e_k(M)$ are the eigenvalues of $M$. Moreover, $\underline{\mathcal{S}}(\lambda,\Lambda,f)$ stands for the space of continuous functions $u$ in $\Omega$ that are subsolutions of the evolutive Pucci's maximal equation
\[
\mathcal{M}^+_{\lambda,\Lambda}(D^2u)-\partial_t u= f(x,t)\text{ in the viscosity sense in $\Omega$},
\]
Similarly, $\overline{\mathcal{S}}(\lambda,\Lambda,f)$ will denote the space of continuous functions $u$ in $\Omega$ that are supersolutions to the Pucci's minimal equation
\[
\mathcal{M}^-_{\lambda,\Lambda}(D^2u)-\partial_tu= f(x,t)\text{ in the viscosity sense in $\Omega$}.
\]
Solutions of fully nonlinear uniformly elliptic equations $F(D^2u)-\partial_tu=0$ belong to the class $\mathcal{S}(\lambda,\Lambda,0)=\overline{\mathcal{S}}(\lambda,\Lambda,0)\cap \underline{\mathcal{S}}(\lambda,\Lambda,0)$.\\

Before proving Theorem \ref{mainpar1} we recall some standard facts, see \cite{KrylovSafonov} and \cite{ImbertSilvestre}:
\begin{thm}[Krylov-Safonov parabolic H\"older regularity]\label{KS}
Let $u\in C(Q_1)$ be a solution of the viscosity inequalities 
\[
\begin{cases}
\mathcal{M}^+_{\lambda,\Lambda}(D^2u)-\partial_t u\geq0\text{ in }Q_1,\\
\mathcal{M}^-_{\lambda,\Lambda}(D^2u)-\partial_t u\leq0\text{ in }Q_1.
\end{cases}
\]
 Then, for some $\alpha>0$ depending on $n,\lambda,\Lambda$ we have that $u\in C^{\alpha,\alpha/2}(Q_\frac12)$ and the following regularity estimate holds
\[
\|u\|_{C^{\alpha,\alpha/2}(Q_\frac12)}\leq C\|u\|_{L^\infty(Q_1)},
\]
where $C$ is a constant depending on $\lambda,\Lambda,n$.
\end{thm}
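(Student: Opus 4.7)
The plan is to prove Hölder continuity via oscillation decay, following the Krylov–Safonov scheme adapted to parabolic cylinders as in Imbert–Silvestre. Since $F$ is uniformly elliptic with $F(0)=0$ (or else one subtracts $F(0)t$), any viscosity solution of $F(D^2u)-\partial_t u=0$ lies in the Pucci class $\mathcal{S}(\lambda,\Lambda,0)=\underline{\mathcal{S}}(\lambda,\Lambda,0)\cap\overline{\mathcal{S}}(\lambda,\Lambda,0)$. Thus the estimate will follow once I prove a diminish-of-oscillation statement: there exist universal constants $\theta\in(0,1)$ and $\rho\in(0,1/2)$ such that
\[
\osc_{Q_{\rho r}(x_0,t_0)} u\leq \theta \osc_{Q_r(x_0,t_0)} u
\]
for every viscosity solution $u\in\mathcal{S}(\lambda,\Lambda,0)$ and every parabolic cylinder $Q_r(x_0,t_0)\subset Q_1$. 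Standard iteration of this inequality then yields $[u]_{C^{\alpha,\alpha/2}(Q_{1/2})}\leq C\|u\|_{L^\infty(Q_1)}$ with $\alpha=\log_{1/\rho}(1/\theta)$.

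The oscillation decay itself is the conclusion of a (weak) parabolic Harnack-type inequality for nonnegative supersolutions, which I would obtain in three ingredients. First, the parabolic Alexandrov–Bakelman–Pucci estimate: for $v\in\overline{\mathcal{S}}(\lambda,\Lambda,f)$ nonnegative on the parabolic boundary of a cylinder, $\sup v^- \leq C\|f^+\|_{L^{n+1}(\{v=\Gamma_v\})}$, where $\Gamma_v$ is the monotone envelope in time of the convex envelope in space. Second, a point-to-measure estimate: by constructing an explicit subsolution barrier $\phi$ (a truncated power of $|x|^2+A(t-t_0)$-type, so that $\mathcal{M}^-(D^2\phi)-\partial_t\phi\leq 0$ outside a small interior cylinder while $\phi$ is negative on a side of $\partial_{\mathrm{par}} Q_r$), I can combine ABP with $\min_{Q_{r/2}^+}u\leq 1$ to deduce that if $u\geq 0$ on $Q_r$ and $u\leq 1$ at some interior point of a sub-cylinder, then $|\{u\leq M\}\cap Q_{r/4}^-|\geq \mu |Q_{r/4}^-|$ for universal $M,\mu$. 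Third, I would iterate this measure estimate with a parabolic cube-stacking (Calderón–Zygmund / “ink-spots”) argument: starting from the small set $\{u>t\}$, cover it by dyadic parabolic cubes selected by a stopping time, and show that when one such cube has density of $\{u>t\}$ above a threshold, its \emph{forward-in-time stacked predecessor} is entirely contained in $\{u>t/M\}$. Summing the resulting geometric decay gives the weak-$L^\eps$ estimate $|\{u>t\}\cap Q_{1/2}|\leq C t^{-\eps}$ for supersolutions $u\geq 0$ normalized so that $\inf_{Q_{3/4}^+}u\leq 1$.

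From the weak-$L^\eps$ estimate and the point-to-measure bound, a standard sup/inf comparison produces the oscillation decay. Given $u\in\mathcal{S}$ in $Q_r$, set $\su=\osc_{Q_r}u$ and look at $v=(u-\inf_{Q_r}u)/\su$, so $0\leq v\leq 1$. Either $|\{v\geq 1/2\}\cap Q_{r/2}|\geq |Q_{r/2}|/2$ or $|\{v\leq 1/2\}\cap Q_{r/2}|\geq |Q_{r/2}|/2$; in either case, apply the weak-$L^\eps$ bound to $v$ or to $1-v$ (each belongs to a Pucci class with zero right-hand side) to deduce $\sup_{Q_{\rho r}}v$ or $\inf_{Q_{\rho r}}v$ is strictly bounded away from the full range $[0,1]$, giving a universal $\theta<1$.

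The main obstacle is the third ingredient: the parabolic cube-stacking (Krylov–Tso, or the cleaner Imbert–Silvestre “ink-spots” lemma) is genuinely more delicate than its elliptic Calderón–Zygmund counterpart because one must stack a forward-in-time tower of parabolic cubes to propagate information along the natural direction of the heat equation while respecting the backward character of parabolic cylinders $Q_r=B_r\times(t-r^2,t]$; any careless choice of the stacked cube breaks either the containment in $Q_1$ or the measure-doubling step. Once the correct stacking is set up, the remaining bookkeeping for the weak-$L^\eps$ bound and its conversion to the Hölder estimate is routine.
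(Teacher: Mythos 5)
The paper does not prove this theorem: it is recalled as a standard fact with references to \cite{KrylovSafonov} and \cite{ImbertSilvestre}, and your outline is precisely the argument of those sources (parabolic ABP, barrier-based point-to-measure estimate, Krylov--Tso/ink-spots covering, weak-$L^\eps$ bound, oscillation decay, iteration). Your sketch is structurally correct and correctly identifies the genuinely delicate step (the forward-in-time stacked covering), though of course the barrier construction and the ink-spots lemma are only named rather than carried out, so as written it is an accurate roadmap rather than a self-contained proof.
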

This H\"older property is the best one can get for equations with bounded and measurable coefficients in the parabolic dimension $D=n+1\geq 2$ by \cite[Theorem 2.2 and the discussion below]{SilvestreAnnali}. The previous estimates are the cornerstone to deduce higher regularity of first-order: the next result holds for viscosity solutions of any fully nonlinear parabolic homogeneous equations without any other assumption on $F$ other than the uniform ellipticity, cf. Theorem 2.3 in \cite{SilvestreAnnali}, Theorems 4.8 and 4.9 in \cite{Wang2} or Lemma 3 p. 257 in \cite{KrylovBookNew} (see also Corollary 5.7 in \cite{CC} and \cite[Theorem 4.24]{FRRO} for the elliptic case).
\begin{thm}[First-order space-time H\"older regularity]\label{firstorder}
Let $u\in C(Q_1)$ be a viscosity solution of $F(D^2u)-\partial_tu=0$, $F$ uniformly elliptic. Then, for some $\theta>0$ depending on $n,\lambda,\Lambda$ we have that $\partial_tu,Du\in C^{\theta,\theta/2}(Q_\frac12)$ and 
\[
\|\partial_tu\|_{C^{\theta,\theta/2}(Q_\frac12)}+\|Du\|_{C^{\theta,\theta/2}(Q_\frac12)}\leq C\|u\|_{L^\infty(Q_1)},
\]
where $C$ depends on $\lambda,\Lambda,n$.
\end{thm}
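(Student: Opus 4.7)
The plan is to exploit the translation invariance of the equation $F(D^2u)-\partial_t u=0$ in the variables $(x,t)$, a consequence of $F$ depending only on $D^2u$ (with no explicit dependence on $x$, $t$, $u$, or $Du$).

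First, for any $(h,\tau)\in\R^n\times\R$ with $|h|+|\tau|^{1/2}$ small enough, the translate $\tilde u(x,t):=u(x+h,t+\tau)$ is also a viscosity solution of the same equation on a shifted cylinder that overlaps with $Q_1$. Since uniform ellipticity gives $\mathcal{M}^-_{\lambda,\Lambda}(M-N)\le F(M)-F(N)\le \mathcal{M}^+_{\lambda,\Lambda}(M-N)$ for all $M,N\in\Sym_n$, the first-order increment $w(x,t):=\tilde u(x,t)-u(x,t)$ satisfies the two-sided Pucci inequalities
\[
\mathcal{M}^-_{\lambda,\Lambda}(D^2 w)-\partial_t w\le 0\le \mathcal{M}^+_{\lambda,\Lambda}(D^2 w)-\partial_t w
\]
in the viscosity sense, i.e. $w\in\mathcal{S}(\lambda,\Lambda,0)$ on the common domain. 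Applying Theorem \ref{KS} to $w$ on nested subcylinders then yields, uniformly in $(h,\tau)$,
\[
[w]_{C^{\alpha_0,\alpha_0/2}(Q_{3/4})}\le C_0\|w\|_{L^\infty(Q_{7/8})}
\]
for some universal $\alpha_0\in(0,1)$ and $C_0>0$ depending only on $n,\lambda,\Lambda$.

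I would then upgrade this Hölder regularity of arbitrary first-order increments into the existence and Hölder regularity of $Du$ and $\partial_t u$ via a dyadic scaling/compactness iteration. At every $(x_0,t_0)\in Q_{1/2}$ and every dyadic scale $r_k=2^{-k}$, one constructs an affine-in-space, linear-in-time polynomial $\ell_k(x,t)=a_k+b_k\cdot(x-x_0)+c_k(t-t_0)$ approximating $u$ on $Q_{r_k}(x_0,t_0)$ with error decaying like $r_k^{1+\alpha}$ for some universal $\alpha\in(0,\alpha_0)$. The key structural remark is that subtracting such an $\ell_k$ from $u$ leaves the equation in the same class: since $D^2\ell_k\equiv 0$ and $\partial_t\ell_k\equiv c_k$, the function $u-\ell_k$ solves $\tilde F(D^2(u-\ell_k))-\partial_t(u-\ell_k)=0$ with $\tilde F(M):=F(M)-c_k$ uniformly elliptic with the same constants. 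The improvement of flatness — passing from $\ell_k$ at scale $r_k$ to $\ell_{k+1}$ at scale $r_k/2$ — is a compactness argument based on the universal $C^{\alpha_0,\alpha_0/2}$ estimate from the previous step applied to the parabolic rescaling $(x,t)\mapsto(r_kx,r_k^2t)$ of the remainder. The identifications $Du(x_0,t_0)=\lim_k b_k$ and $\partial_t u(x_0,t_0)=\lim_k c_k$ at every $(x_0,t_0)\in Q_{1/2}$, combined with a Campanato-type characterization, then give $Du,\partial_t u\in C^{\alpha,\alpha/2}(Q_{1/2})$ with the claimed estimate.

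The main obstacle is the iteration step. The Pucci class $\mathcal{S}(\lambda,\Lambda,0)$ is not closed under subtraction, so one cannot directly iterate the first step on second-order increments of $u$; the improvement of flatness must be run on the full nonlinear equation, crucially using its translation invariance together with the compactness supplied by Theorem \ref{KS}. A secondary technical point is that $\partial_t u$ sits at order $|t-t_0|^{1+\alpha/2}$ rather than $|x-x_0|^{1+\alpha}$ because of the parabolic scaling, so the quantitative expansion must be tracked in mixed orders in space and time; however, the same iteration produces both derivatives simultaneously.
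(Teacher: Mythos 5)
Your Step 1 — using translation invariance to place first-order increments $w(x,t)=u(x+h,t+\tau)-u(x,t)$ in the Pucci class $\mathcal{S}(\lambda,\Lambda,0)$ and then invoking Theorem \ref{KS} — is exactly the mechanism the paper is pointing at (the introduction says the first derivative "or its incremental quotient, both in time and space, satisfies a linear nondivergence form equation"). However, the route you take from there is not the standard one (which iterates the Krylov--Safonov estimate on increments $w/(|h|+|\tau|^{1/2})^{\beta}$ together with the incremental-quotient characterization of H\"older spaces, as in \cite[Lemma 5.6, Cor. 5.7]{CC} and its parabolic analogues), and it contains a genuine gap precisely where the theorem makes its stronger claim.

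The gap is in the regularity of $\partial_t u$. An affine-in-space, linear-in-time polynomial $\ell_k=a_k+b_k\cdot(x-x_0)+c_k(t-t_0)$ with $\|u-\ell_k\|_{L^\infty(Q_{r_k}(x_0,t_0))}\lesssim r_k^{1+\alpha}$ determines $b_k$ to within $O(r_k^{\alpha})$ (summable, so $Du$ is fine), but it determines $c_k$ only to within $O(r_k^{1+\alpha}/r_k^{2})=O(r_k^{\alpha-1})$, which \emph{diverges}. So $c_k$ need not converge, $\partial_t u$ is not identified, and the Campanato argument does not close; this is why a $C^{1+\alpha,(1+\alpha)/2}$ expansion yields only $u\in C_t^{(1+\alpha)/2}$, not $\partial_t u\in C^{\alpha,\alpha/2}$. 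To get the latter with this flatness scheme you would need error $r_k^{2+\alpha}$ in the $t$-direction, which forces a quadratic-in-$x$ Taylor term — i.e., a $C^{2+\alpha,1+\alpha/2}$ expansion, which is the content of the paper's \emph{main} theorem, not something you may invoke here. Your remark that "the same iteration produces both derivatives simultaneously" papers over exactly this. A second, lesser concern: the compactness/improvement-of-flatness scheme has no useful base case at the $C^{1,\alpha}$ level for general uniformly elliptic $F$, since the blow-up limit is again a solution of an operator in the same class with no extra smoothness, so the "improvement" step cannot be extracted by compactness alone. The clean way to finish is the direct iteration: bootstrap the time-H\"older exponent of $u$ using time increments in $\mathcal{S}$ plus the incremental-quotient lemma until $u$ is Lipschitz in $t$, then apply Theorem \ref{KS} once more to the bounded quotient $(u(\cdot,\cdot+\tau)-u)/\tau\in\mathcal{S}$ to conclude $\partial_t u\in C^{\alpha,\alpha/2}$.
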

We now state the main result we need to prove parabolic maximal $C^\alpha$ regularity for \eqref{parIsaacs}. This is an interior $C^{2,\beta}$ estimate, $\beta$ small and universal, for the stationary non-homogeneous equation driven by operators satisfying \eqref{Fcc}, see Corollary 1.3 and Remark 1.4 of \cite{CCjmpa}.
\begin{thm}\label{EKell}
Let $u\in C(B_1)$ be a viscosity solution to $F(D^2u)=f(x)$ in $B_1$ with $f\in C^\gamma(B_1)$ for some $0<\gamma<\bar\gamma$, $\bar\gamma\in(0,1)$ being a universal constant, and $F$ satisfying \eqref{Fcc}. Then, the following estimate holds
\[
\|u\|_{C^{2,\gamma}(\overline{B}_\frac12)}\leq C(\|u\|_{L^\infty(B_1)}+\|f\|_{C^{\gamma}(\overline B_\frac34)}),
\]
where $C$ depends on $n,\lambda,\Lambda,\gamma$.
\end{thm}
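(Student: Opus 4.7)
My plan is to follow the standard Caffarelli pointwise $C^{2,\beta}$ iteration scheme. It suffices to show that at every $x_0\in B_{1/2}$ one can produce quadratic polynomials $P_k$, $k\geq 0$, with $F(D^2P_k)=f(x_0)$, $|D^2 P_k|$ bounded by a universal constant, and $\|u-P_k\|_{L^\infty(B_{\rho^k}(x_0))}\leq C\rho^{k(2+\beta)}$ at dyadic scales $\rho^k$ for some universal $\rho\in(0,1)$. Subtracting a fixed quadratic $P^{*}$ chosen so that $F(D^2P^{*})=f(x_0)$ — which exists by uniform ellipticity applied on the segment $\{tI_n:t\in\R\}$ together with $F(0)=0$ — reduces matters to the case $f(x_0)=0$, where $|f(x)|\leq [f]_{C^\beta}|x-x_0|^\beta$. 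After translation and rescaling one may then assume $\|u\|_\infty\leq 1$ and $\|f\|_\infty\leq \delta$ arbitrarily small, the $C^\beta$ seminorm of $f$ being picked up through the geometric scaling in the iteration.

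The one-step improvement from which the iteration is built is the following approximation lemma: for every $\varepsilon>0$ there exists $\delta>0$ such that any viscosity solution $u$ of $F(D^2u)=g$ in $B_1$ with $\|u\|_\infty\leq 1$ and $\|g\|_\infty\leq\delta$ satisfies $\|u-v\|_{L^\infty(B_{3/4})}\leq \varepsilon$ for some $v$ solving in $B_{3/4}$ either the concave equation $F^\cap(D^2 v)=0$ or the convex equation $F^\cup(D^2v)=0$. Compactness is supplied by the elliptic analogue of Theorem~\ref{firstorder} (Corollary~5.7 of \cite{CC}): any sequence $u_k$ of solutions of $F(D^2u_k)=g_k$ with $g_k\to 0$ admits, after extraction, a locally uniform limit $u_\infty$ solving $F(D^2u_\infty)=0$. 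Once such a $v$ is identified, the classical elliptic Evans--Krylov theorem applied to the concave operator $F^\cap$ (or to the concave operator $-F^\cup$) gives $v\in C^{2,\bar\alpha}(B_{1/2})$ with a universal estimate, hence a quadratic polynomial $P$ approximating $v$ — and so $u$ — to order $\rho^{2+\bar\alpha}$ in $B_\rho$. Choosing $\rho$ small enough that $\rho^{\bar\alpha-\beta}$ absorbs the universal constant, and then $\varepsilon$ correspondingly small, closes the one-step gain; the iteration then proceeds by rescaling $u_{k+1}(x)\coloneq\rho^{-2}(u-P_k)(\rho x)$ and checking that the hypotheses are preserved.

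The main obstacle is the approximation lemma itself, i.e.\ proving that the compactness limit $u_\infty$ can be taken to solve one of the two pure concave/convex equations. The structural observation driving the dichotomy is that $u_\infty$ is automatically a viscosity \emph{subsolution} of both $F^\cap(D^2u)=0$ and $F^\cup(D^2u)=0$, since $F=\min\{F^\cap,F^\cup\}\leq F^\cap$ and $F\leq F^\cup$; the supersolution property, on the other hand, distributes pointwise between the two operators. To upgrade this pointwise alternative into a global one, I would argue by contradiction on a subball: on the open set $\{F^\cap(D^2u_\infty)<F^\cup(D^2u_\infty)\}$, $u_\infty$ is a full viscosity solution of the concave equation $F^\cap(D^2u_\infty)=0$ and hence $C^{2,\bar\alpha}$ by Evans--Krylov, and symmetrically on $\{F^\cup<F^\cap\}$; using the $C^{1,\alpha}$ bound on $u_\infty$ together with the Lipschitz dependence of $F^\cap,F^\cup$ on the Hessian, one shows that the interface cannot obstruct the regularity, and if the two open regions cannot be globally separated on $B_{3/4}$, one extracts a further subball on which a single alternative holds, at the cost of a universal change of scale that is absorbed into the iteration. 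With the lemma in hand, the inhomogeneity $f\not\equiv 0$ is handled through the reduction of the first paragraph, closing the proof.
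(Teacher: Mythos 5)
The paper does not prove this statement at all: Theorem \ref{EKell} is quoted verbatim from Corollary 1.3 and Remark 1.4 of \cite{CCjmpa}, so any comparison is really between your sketch and the Cabr\'e--Caffarelli proof. Your outer scaffolding (reduction to $f(x_0)=0$ by subtracting a paraboloid with $F(D^2P^*)=f(x_0)$, compactness, one-step improvement, dyadic iteration) is the standard Caffarelli scheme and is fine as far as it goes. The correct observation that a solution of $\min\{F^\cap,F^\cup\}(D^2u)=0$ is a viscosity subsolution of \emph{both} pure equations, while the supersolution property only holds for one of them at each touching point, is also right and is indeed the starting point of \cite{CCjmpa}.

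The genuine gap is the approximation lemma, which is exactly where the whole difficulty of the theorem sits, and your argument for it does not work. First, the set $\{F^\cap(D^2u_\infty)<F^\cup(D^2u_\infty)\}$ is not well defined for a merely continuous viscosity solution, and even after securing $W^{2,p}$ regularity it is only a measurable set, not an open one, so the claim that $u_\infty$ solves the concave equation ``on the open set'' where $F^\cap<F^\cup$ is unjustified. Second, and more seriously, the concluding step --- ``if the two open regions cannot be globally separated, one extracts a further subball on which a single alternative holds, at the cost of a universal change of scale'' --- is false as stated: the geometry of the interface between the two regimes depends on the particular solution, so the scale at which one alternative prevails is not universal, and without universality the dyadic iteration cannot be closed at every point of $B_{1/2}$. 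A solution of the mixed equation need not solve either pure equation on any ball of controlled radius. Cabr\'e and Caffarelli avoid this dichotomy entirely: roughly, they exploit the subsolution property with respect to the concave operator to get one-sided second-order control, and then apply the weak Harnack inequality to a quantity built from $F^\cup(D^2u)$ (a supersolution of a linearized equation) to obtain decay of the oscillation of $D^2u$; no case splitting into regions occurs. As written, your proposal assumes away the main obstruction rather than overcoming it, so the proof is incomplete at its central step.
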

\begin{rem}
The universal exponent $\bar\gamma\in(0,1)$ is the H\"older regularity exponent of second derivatives of solutions solving the constant coefficient equation, where $F$ is uniformly elliptic and satisfies \eqref{Fcc}.
\end{rem}
Next we exploit the above $C^{2,\gamma}_x$ estimate and the evolution equation to improve the time-H\"older exponent of first-order derivatives from Theorem \ref{firstorder} via a comparison principle argument. This is inspired by \cite{Andrews}.
\begin{prop}\label{improve}
Suppose that there exist $\beta\in(0,1)$ and a constant $C>0$ such that \[|D^2u(x,\cdot)-D^2u(y,\cdot)|\leq C|x-y|^{\beta},\ x,y\in B_\frac12.\] Any $C^{2,1}$ solution to the uniformly parabolic equation \eqref{parIsaacs} (without any other structural assumption on $F$) satisfies the estimate
\[
|Du(\cdot,t)-Du(\cdot,s)|\leq C'|t-s|^{\frac{1+\beta}{2}}\text{ in }B_\frac{1}{16}.
\]
where $C'$ is a positive constant depending on $C, n,\beta, \lambda,\Lambda$.
\end{prop}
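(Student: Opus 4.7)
The plan is to combine a second-order Taylor comparison at the bottom of a small parabolic cylinder with a maximum-principle argument driven by a radial parabolic barrier. Fix $x_0 \in B_{1/16}$ and times $s_0 < t_0$ in $(-1,0]$ with $h := t_0 - s_0$ small, and set $r := \sqrt{h}$. The central object is the parabolic polynomial
\[
p(x,t) := u(x_0, s_0) + Du(x_0, s_0)\cdot(x - x_0) + \tfrac12 D^2 u(x_0, s_0)(x - x_0)\cdot(x - x_0) + F(D^2 u(x_0, s_0))(t - s_0).
\]
Since $D^2 p$ and $\partial_t p$ are constants, $p$ is itself a classical solution of $F(D^2 p) - \partial_t p = 0$. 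Consequently the difference $v := u - p$ satisfies, in the classical sense and using only uniform ellipticity of $F$ (no convexity/concavity),
\[
\mathcal{M}^-_{\lambda,\Lambda}(D^2 v) \leq \partial_t v \leq \mathcal{M}^+_{\lambda,\Lambda}(D^2 v).
\]
The spatial $\beta$-Hölder assumption on $D^2 u$, combined with second-order Taylor's formula at $t = s_0$, yields $|v(x, s_0)| \leq C_1 |x - x_0|^{2+\beta}$ on $B_{1/4}(x_0)$, with $C_1$ depending only on $C$ and $\beta$, and all of $v, Dv, D^2 v, \partial_t v$ vanish at $(x_0, s_0)$.

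The next step is to propagate this initial-time smallness to the cylinder $B_R(x_0) \times [s_0, t_0]$ with $R = 1/4$ by means of the radial barrier
\[
\Phi(x, t) := A\bigl(|x - x_0|^2 + K_0(t - s_0)\bigr)^{(2+\beta)/2}.
\]
A direct eigenvalue computation shows that the eigenvalues of $D^2 \Phi$ are nonnegative and bounded by a multiple of $\rho^\beta$ (with $\rho^2 := |x-x_0|^2 + K_0(t-s_0)$), and the supersolution condition $\partial_t \Phi \geq \mathcal{M}^+_{\lambda,\Lambda}(D^2 \Phi)$ reduces to $K_0 \geq 2\Lambda(n+\beta)$; I would take $K_0 := 2\Lambda(n+1)$. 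Choosing $A$ large enough to dominate both $C_1$ on the initial slice and $\|v\|_{L^\infty}/R^{2+\beta}$ on the lateral portion of the parabolic boundary, the comparison principle for the Pucci class (applied to $\Phi$ against $v$ and against $-v$) gives $|v| \leq \Phi$ throughout the cylinder. Restricting to $|x - x_0| \leq r$ and $t - s_0 \leq r^2$ produces the key pointwise bound
\[
\|v\|_{L^\infty(B_r(x_0) \times [s_0, t_0])} \leq A(1 + K_0)^{(2+\beta)/2} r^{2+\beta} =: C_2\, r^{2+\beta}.
\]

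The conclusion follows by parabolic rescaling. Set $\tilde v(y, \tau) := r^{-(2+\beta)} v(x_0 + ry,\, t_0 + r^2 \tau)$ on $Q_1 = B_1 \times (-1,0]$; then $\tilde v$ sits in the same Pucci class on $Q_1$ and $\|\tilde v\|_{L^\infty(Q_1)} \leq C_2$. The interior $C^{1,\alpha}$ estimate for the Pucci class (the Krylov--Safonov-based ingredient underlying Theorem \ref{firstorder}, now applied to the linear parabolic equation with bounded measurable coefficients satisfied by $\tilde v$) yields $|D\tilde v(0,0)| \leq C_3$. Unwinding the rescaling and using that $Dp(x_0, t) \equiv Du(x_0, s_0)$ for every $t$, one obtains
\[
|Du(x_0, t_0) - Du(x_0, s_0)| = |Dv(x_0, t_0)| = r^{1+\beta}|D\tilde v(0, 0)| \leq C_3\, h^{(1+\beta)/2}.
\]
The case where $h$ is bounded away from zero is handled trivially from the uniform bound on $Du$ provided by the $C^{2,1}$ regularity of $u$.

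The main technical hurdle is the barrier calibration: the time-homogeneity $(2+\beta)/2$ must match the spatial homogeneity $2+\beta$ precisely, so that the Pucci supersolution inequality holds with a constant $K_0$ depending only on $n$ and $\Lambda$. This is the sole step where the anisotropic parabolic scaling $r^2 \sim h$ combined with $(2+\beta)$-growth in $x$ becomes essential, and it is responsible for the final Hölder exponent $(1+\beta)/2$ in time.
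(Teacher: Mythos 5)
Your proof is correct in substance, but it follows a genuinely different route from the paper's. The paper works directly with the spatial difference quotients $u_h$ (which approximate $\partial_e u$ and lie in the Pucci class), encodes the $C^{1,\beta}_x$ regularity of $u_h$ through the Taylor-polynomial characterization of H\"older spaces, and applies Young's inequality to split $C|z'-z|^{1+\beta}\leq\eps+\widetilde C\eps^{-(1-\beta)/(1+\beta)}|z'-z|^2$; it then builds quadratic-in-space, linear-in-time super/subsolutions $\Phi^\pm$ for each $\eps$, applies the comparison principle, and optimizes over $\eps$ to extract the $(1+\beta)/2$ time-H\"older modulus of $u_h$ directly, with no rescaling or appeal to $C^{1,\alpha}$ interior estimates. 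Your construction instead subtracts the parabolic Taylor polynomial $p$ (an exact solution of $F(D^2p)-\partial_t p=0$), propagates the $(2+\beta)$-order smallness of $v=u-p$ from the initial slice by a single radial barrier $A\rho^{2+\beta}$, and then converts the $L^\infty$ bound $\lesssim r^{2+\beta}$ into a gradient bound at $(x_0,t_0)$ by parabolic rescaling and an interior $C^{1,\alpha}$ estimate. The paper's method is more self-contained and avoids the rescaling step; yours yields a cleaner barrier (no $\eps$-optimization) and follows a familiar Caffarelli-style normalization pattern. Both arguments share the same implicit restriction: the lateral-boundary comparison requires the threshold (the paper's ``$\eps$ small enough'', your choice of $A\geq\|v\|_{L^\infty}/R^{2+\beta}$) to depend on an a priori $L^\infty$/$C^{2,1}$-type bound on $u$, and in both cases large time increments are handled trivially.

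One point worth tightening: you invoke ``the interior $C^{1,\alpha}$ estimate for the Pucci class'' applied to $\tilde v$, but membership in $\mathcal{S}(\lambda,\Lambda,0)$ alone yields only Krylov--Safonov $C^{\alpha,\alpha/2}$ regularity, not $C^{1,\alpha}$. What saves the step is that $v$ solves the translation-invariant equation $\tilde F(D^2 v)-\partial_t v=0$ with $\tilde F(M):=F(M+D^2u(x_0,s_0))-F(D^2u(x_0,s_0))$, which is uniformly elliptic with $\tilde F(0)=0$, so the rescaled $\tilde v$ solves an equation of the same type and Theorem~\ref{firstorder} applies verbatim. Stating it this way also dispels the worry about ``linear parabolic equations with bounded measurable coefficients'', for which no $C^{1,\alpha}$ estimate is available.
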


\begin{rem}
Note that the validity of the H\"older regularity condition on the second derivatives hides some additional structural conditions on $F$ beyond the uniform ellipticity, unless the space dimension $n=1,2$.
\end{rem}

\begin{proof}
First note that for $e\in\R^n$, $|e|=1$,
\[
u_h(x,t):=\frac{u(x+he,t)-u(x,t)}{h}\in\mathcal{S}(\lambda/n,\Lambda),
\]
i.e. $u_h$ is a solution to a nondivergence form equation in a smaller cylinder. This means (see e.g. \cite[Proposition 5.5]{CC} or \cite[Theorem 4.6]{Wang2}) that we have the validity of the following inequalities
\begin{equation}\label{eq1}
\mathcal{M}^+_{\lambda,\Lambda}(D^2u_h)-\partial_t u_h\geq0\text{ in }Q_{\frac12-h}
\end{equation}
and
\begin{equation}\label{eq2}
\mathcal{M}^-_{\lambda,\Lambda}(D^2u_h)-\partial_t u_h\leq0\text{ in }Q_{\frac12-h}.
\end{equation}
Moreover, the assumptions on the regularity of $D^2u$ in space (for fixed times) imply that $u_h\in C^{1,\beta}_x$, $\beta\in(0,1)$, uniformly with respect to $h$ .\\
We give a proof using the maximum principle and the equivalence of the H\"older seminorms with those given by Taylor polynomials \cite[Sections 3.3 and 8.5]{KrylovBookHolder}. We have by Theorem 3.3.1 in \cite{KrylovBookHolder}
\[
|u_h(z',t)-u_h(z,t)-Du_h(z,t)\cdot (z'-z)|\leq C|z'-z|^{1+\beta}\text{ on }Q_{\frac12-h}.
\]
We apply the weighted Young's inequality to find for a positive constant $\widetilde{C}$
\[
C|z'-z|^{1+\beta}\leq \eps+\widetilde C\eps^{-\frac{1-\beta}{1+\beta}}|z'-z|^2.
\]
This implies the inequality
\[
u_h(z',t)\leq u_h(z,t)+Du_h(z,t)\cdot (z'-z)+\eps+\widetilde C\eps^{-\frac{1-\beta}{1+\beta}}|z'-z|^2.
\]
We define
\[
\Phi^+(z',t')=u_h(z,t)+Du_h(z,t)\cdot (z'-z)+\eps+\widetilde C\eps^{-\frac{1-\beta}{1+\beta}}|z'-z|^2+\nu(t'-t).
\]
One can check, after choosing $\nu\geq2\widetilde Cn\Lambda\eps^{-\frac{1-\beta}{1+\beta}}$, the validity of
\[
\mathcal{M}^+_{\lambda,\Lambda}(D^2\Phi^+)-\partial_t\Phi^+\leq0, 
\]
so $\Phi^+$ is a classical supersolution to the equation solved by $u_h$ in $Q_\frac{1}{4}$, provided that $h<\frac{1}{8}$. Recall that $u_h\in\mathcal{S}(\lambda/n,\Lambda)$ and thus solves a linear parabolic homogeneous equation in nondivergence form, see \eqref{eq1}-\eqref{eq2}. Choosing $\eps$ small enough, $\eps<K\|u\|_{C^2}^{-\frac{1-\beta}{1+\beta}}$ for a suitable $K>0$, we have that $\Phi^+\geq u_h$ on $\partial B_\frac{1}{4}\times(-\frac{1}{16},0]$. By the maximum principle \cite[Theorem 8.2]{CIL} $\Phi^+\geq u_h$ in the interior of the cylinder. Taking $z'=z$ and optimizing with respect to $\eps$ we conclude
\[
u_h(z,t')\leq u_h(z,t)+\bar C_1(t'-t)^\frac{1+\beta}{2},
\]
for a positive constant $\bar C_1$. We can get a similar estimate from below by constructing a subsolution, say $\Phi^-$, to the PDE
\[
\mathcal{M}^-_{\lambda,\Lambda}(D^2\Phi^-)-\partial_t\Phi^-\geq0
\]
and applying the comparison principle. More precisely, we have
\[
u_h(z',t)\geq u_h(z,t)+Du_h(z,t)\cdot (z'-z)-\eps-\widetilde C\eps^{-\frac{1-\beta}{1+\beta}}|z'-z|^2.
\]
We define
\[
\Phi^{-}(z',t')=u_h(z,t)+Du_h(z,t)\cdot (z'-z)-\eps-\widetilde C\eps^{-\frac{1-\beta}{1+\beta}}|z'-z|^2-\nu(t'-t)
\]
and choose $\nu\geq2\widetilde Cn\lambda\eps^{-\frac{1-\beta}{1+\beta}}$ to conclude that $\mathcal{M}^-_{\lambda,\Lambda}(D^2\Phi^-)-\partial_t\Phi^-\geq0$. By the maximum principle we have $\Phi^-\leq u_h$ in the interior of the cylinder, and proceeding as above we have for a positive constant $\bar C_2$ the estimate
\[
u_h(z,t')\geq u_h(z,t)-\bar C_2(t'-t)^\frac{1+\beta}{2}.
\]
\end{proof}
We are now ready with the proof of the main result:
\begin{proof}[Proof of Theorem \ref{mainpar1}]
Theorem \ref{firstorder} shows that viscosity solutions of \eqref{parIsaacs} satisfy $\partial_t u\in C^{\theta,\theta/2}$ for some small $\theta\in(0,1)$ depending on $\lambda,\Lambda$ and $n$ ($\theta$ is the exponent of Theorem \ref{firstorder}). Recall that such a result does not depend on the structure of the operator and it requires only  its uniform ellipticity. Therefore, we can freeze the time-variable and consider the solution $u=u(\cdot,t)$ of
\[
F(D^2u(\cdot,t))=\partial_t u(\cdot,t)\in C^{\theta}.
\]
Note that $\partial_t u(\cdot,t)$ is H\"older continuous with a possibly smaller exponent by the inclusion of H\"older spaces, namely $\partial_t u(\cdot,t)\in C^{\tilde\alpha}$, $\tilde\alpha\in(0,\bar\gamma)\cap (0,\theta]$. By Theorem \ref{EKell} we have that $D^2u(\cdot,t)$ exists and it is H\"older continuous with an exponent $\tilde\alpha\in(0,\bar\gamma)\cap (0,\theta]$ on each time slice, namely
\[
|D^2u(x,\cdot)-D^2u(y,\cdot)|\leq C|x-y|^{\tilde\alpha}.
\]
Applying Proposition \ref{improve} with $\beta=\tilde\alpha$ we conclude that
\[
|Du(\cdot,t)-Du(\cdot,s)|\leq C|t-s|^{\frac{1+\tilde\alpha}{2}}.
\]
Therefore, $D^2u$ is time-H\"older continuous with exponent $\tilde\alpha/2$ after invoking the interpolation argument from Lemma 3.1 p. 80 of \cite{LSU} applied to $Du$, see also \cite[Section 3.4]{Andrews}. We recall briefly that it says that if a function $v$ is $C^{\gamma_1}_t$ and $Dv\in C_x^{\gamma_2}$, $\gamma_1,\gamma_2\in(0,1)$, then $Dv\in C^{\gamma_1\gamma_2/(1+\gamma_2)}_t$. This implies that $D^2u\in C^{\tilde \alpha,\tilde \alpha/2}$. \\

\end{proof}

\begin{rem}\label{compare}
It is worth comparing the strategy outlined above with those already appeared in the literature of fully nonlinear parabolic equations. There are essentially two ways to achieve $C^{2+\alpha,1+\alpha/2}$ estimates for homogeneous fully nonlinear parabolic equations. The first one is to prove Evans-Krylov estimates directly for parabolic equations, as it is done in \cite{KrylovBookOld} for concave equations. The second one is to consider \eqref{parIsaacs} as $F(D^2u)=\partial_t u$, give an estimate of $\partial_t u$ and then consider the PDE for any fixed $t$ as an elliptic one: this viewpoint however requires the development of a $C^{2,\alpha}$ theory for the non-homogeneous elliptic equation $F(D^2u)=f(x)\in C^\alpha$. The approach carried out in Chapter 12 of \cite{KrylovBookNew}, see also \cite[Appendix A]{BrendleHuisken}, follows this second idea, and exploits an interpolation argument between $D^2u(\cdot,t)\in C^\alpha_x$ and $\partial_t u(\cdot,t)\in C^\alpha_x$ for fixed times $t$ (cf. Lemma 4 p. 259 in \cite{KrylovBookNew}), a slightly different way with respect to our method. Related ideas were also employed in \cite{Andrews,SilvestreAnnali} for parabolic equations in 3D. 
\end{rem}

The foregoing approach applies to deduce interior $C^{2+\alpha,1+\alpha/2}$ bounds in the case of concave operators with a different proof than Theorem 4.13 in \cite{Wang2} (in particular without using Lemma 4.16 therein), \cite[Theorem 12.2.1]{KrylovBookNew} and \cite[Appendix A]{BrendleHuisken}. Here we provided a general strategy to deduce parabolic $C^{2+\alpha,1+\alpha/2}$ regularity through the maximum principle and the $C^{2,\alpha}$ regularity in the space variable only. We can generalize the previous argument providing an abstract result under the following hypothesis:
\begin{itemize}
\item[(H)] The stationary non-homogeneous equation $F(D^2u)=f$, $f\in C^\alpha$, has interior $C^{2,\alpha}$ estimates for some universal $\alpha\in(0,1)$ with constant $c_e$.
\end{itemize}
\begin{thm}
Let $u\in C(Q_1)$ be a viscosity solution of the equation $F(D^2u)-\partial_tu=0$ in $Q_1$. Assume that $F$ is uniformly elliptic, $F(0)=0$, and that (H) holds (i.e. the nonhomogeneous equation admits local $C^{2,\alpha}$ estimates). Then, $u\in C^{2+\alpha,1+\alpha/2}(Q_\frac12)$ and the following estimate holds
\[
\|u\|_{C^{2+\alpha,1+\alpha/2}(Q_\frac12)}\leq C
\] 
for a constant $C$ depending on $n,\lambda,\Lambda,\alpha,c_e$.
\end{thm}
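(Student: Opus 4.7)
The plan is to mirror the proof of Theorem \ref{mainpar1}, with hypothesis (H) now playing the role that Theorem \ref{EKell} played there, i.e.\ supplying the slicewise elliptic $C^{2,\alpha}$ estimate. The remaining ingredients (Krylov-Safonov for first derivatives, the maximum principle argument of Proposition \ref{improve}, and the interpolation of \cite{LSU}) only require uniform ellipticity, so no further structure on $F$ enters.

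First I would apply Theorem \ref{firstorder} to obtain $\partial_t u \in C^{\alpha_0,\alpha_0/2}(Q_{3/4})$ for some universal $\alpha_0 \in (0,1)$ depending only on $n,\lambda,\Lambda$. Set $\alpha' := \min(\alpha,\alpha_0)$; the same estimate persists with exponent $\alpha'$. Freezing $t$, I would then read the equation as $F(D^2 u(\cdot,t)) = \partial_t u(\cdot,t)$, a stationary PDE whose right-hand side belongs to $C^{\alpha'}(B_{3/4})$ with a $t$-uniform norm controlled by $\|u\|_{L^\infty(Q_1)}$. Hypothesis (H) applied slicewise then yields
\[
|D^2 u(x,t) - D^2 u(y,t)| \leq C |x-y|^{\alpha'}, \qquad x,y \in B_{1/2},
\]
with $C$ depending on $n,\lambda,\Lambda,\alpha,c_e$ and $\|u\|_{L^\infty(Q_1)}$.

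Next I would feed $\beta = \alpha'$ into Proposition \ref{improve} to upgrade the time H\"older exponent of $Du$ to $(1+\alpha')/2$. The final step is the interpolation Lemma 3.1, p.\ 80 of \cite{LSU} applied to $v = Du$, with $\gamma_1 = (1+\alpha')/2$ and $\gamma_2 = \alpha'$: a direct computation gives $\gamma_1 \gamma_2 / (1+\gamma_2) = \alpha'/2$, hence $D^2 u \in C^{\alpha'/2}_t$. Combined with $D^2 u \in C^{\alpha'}_x$ and $\partial_t u \in C^{\alpha',\alpha'/2}$ already obtained, this delivers the $C^{2+\alpha',1+\alpha'/2}$ bound in $Q_{1/2}$, which coincides with the stated conclusion whenever $\alpha \leq \alpha_0$ (and otherwise forces one to read the conclusion with the universal $\alpha_0$ in place of $\alpha$).

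The main technical obstacle I anticipate is that Proposition \ref{improve} is formulated for classical $C^{2,1}$ solutions, whereas $u$ is only assumed to be a viscosity solution. I would bypass this in the standard way: run the comparison-principle construction of $\Phi^{\pm}$ not on $u$ itself but on a regularization (for instance sup/inf-convolutions, or solutions of equations obtained by mollifying $F$), and then pass to the limit using the a priori bounds already established at the slicewise stage via (H) and Theorem \ref{firstorder}. This is the step that carries the most bookkeeping in a fully detailed write-up, but it requires no structural assumption on $F$ beyond uniform ellipticity, so the abstract generality of the statement is preserved.
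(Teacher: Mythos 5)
Your proposal is correct and is precisely the intended proof: the paper states this theorem without a separate argument, immediately after noting that the proof of Theorem \ref{mainpar1} ``can be generalized,'' so the route is exactly yours—Theorem \ref{firstorder} for $\partial_t u$, hypothesis (H) slicewise in place of Theorem \ref{EKell}, Proposition \ref{improve}, then the interpolation lemma of \cite{LSU}. Your bookkeeping with $\alpha'=\min(\alpha,\alpha_0)$ is the right way to read the (slightly loosely stated) conclusion.

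One remark: the regularization detour you anticipate at the end is not needed. By the time Proposition \ref{improve} is invoked, the first two steps have already upgraded $u$ to a genuinely classical solution on $Q_{3/4}$: Theorem \ref{firstorder} gives $Du,\partial_t u\in C^{\alpha',\alpha'/2}$, and (H) applied slicewise gives $D^2u(\cdot,t)\in C^{\alpha'}_x$ with a $t$-uniform bound, so $u\in C^{2,1}$ with the required spatial H\"older modulus on $D^2u$. Moreover, the proof of Proposition \ref{improve} is carried out at the level of difference quotients $u_h$ and uses only the viscosity comparison principle together with the Taylor estimate $|u_h(z',t)-u_h(z,t)-Du_h(z,t)\cdot(z'-z)|\leq C|z'-z|^{1+\beta}$ coming from the assumed spatial regularity of $D^2u$; the $C^{2,1}$ phrasing in its statement is stronger than what its proof actually requires. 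So you may apply it directly, without sup/inf-convolutions or mollification of $F$.
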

\begin{proof}
Since $F$ is uniformly elliptic, we have $\partial_t u,Du\in C^{\alpha,\alpha/2}$. Then, the assumption (H) yields $D^2u\in C^{\alpha}_x$, uniformly in time by considering the equation $F(D^2u(\cdot,t))=\partial_t u(\cdot.t)$. Therefore, we can proceed as in the proof of Theorem \ref{mainpar1} to prove that $D^2u\in C_t^{\alpha/2}$.
\end{proof}


\end{document}